\theoremstyle{plain}
\newtheorem{theorem}{Theorem}
\newtheorem{corollary}[theorem]{Corollary}
\theoremstyle{definition}
\newtheorem{example}[theorem]{Example}
\newcommand{\E}{{\mathcal E}}
\newcommand{\N}{{\mathbb N}}
\newcommand{\M}{{\mathbb M}}
\newcommand{\Odd}{{\mathcal O}}
\newcommand{\Z}{{\mathbb Z}}
\newcommand{\T}{{\mathbb T}}
\newcommand{\Pa}{{\mathcal P}}
\newcommand{\Lo}{{\mathcal L}}
\def\li{\text{\rm Li}}
\title[Some Applications of $S$-restricted Set Partitions]{Some Applications of $S$-restricted Set Partitions}
\author{Be\'ata B\'enyi}
\address{\noindent Faculty of Water Sciences, National University of Public Service, Budapest, HUNGARY}
\email{beata.benyi@gmail.com}
\author{Jos\'e L. Ram\'{\i}rez}
\address{\noindent Departamento de Matem\'aticas, Universidad Nacional de Colombia, Bogot\'a,  COLOMBIA}
\email{jlramirezr@unal.edu.co}
\date{\today}
\subjclass[2010]{Primary 11B83; Secondary  11B73, 05A19, 05A15.}
\keywords{Combinatorial identities, generating functions, $S$-restricted Stirling numbers,  lonesum matrices,  poly-Bernoulli numbers.}
\begin{document}
\begin{abstract}
In the paper, the authors present several new relations and applications for the combinatorial sequence that counts the possible partitions of a finite set with the restriction that the size of each block is contained in a given set. One of the main applications is in the study of lonesum matrices. 
\end{abstract}

\maketitle

\section{Introduction}
Stirling numbers of the second kind is an important number sequence in mathematics. The combinatorics of these numbers is very rich. Since it counts the fundamental combinatorial object, set partitions of $n$ elements into $k$ non-empty blocks, it arises often in enumeration problems. The arithmetical properties of these numbers as well as the analytical properties of their generating function set them into the focus of many research. For instance, it also plays in umbral calculus a central role. 

Stirling numbers were generalized in different ways, combinatorially: restrictions on the size of the blocks lead to restricted Stirling numbers and associated numbers (cf. \cite{Comtet}); restrictions of the appearance of distinguished elements lead to $r$-Stirling numbers \cite{Broder}. Considering different weights $q$-analogues arose in the literature (cf. \cite{mansour-2015a}). Furthermore, generalizations were introduced that connect to other numbers, showing that Stirling numbers are special cases of a large family of interesting number sequences. Though some ideas were already mentioned in early works, as in the fundamental book of Comtet \cite{Comtet}, recently the topic started to win again interests and more research groups are concerned with it (\cite{bona-2016a}, \cite{choijy-2006b}, \cite{Eng}, \cite{KLM2}, \cite{KLM},  \cite{MS}, \cite{Moll}, \cite{Wakhare}). 

We adjoin this research investigating the case when the size of each block is contained in a given set $S$. We consider some applications of these generalization, studying the modifications of well-known formulas/identities of the classical Stirling numbers. We also study the number sequences that are strongly related, as for example Bell numbers, Fubini numbers, poly-Bernoulli numbers. We involve into our study a large scale of methods from elementary combinatorics to iterated integrals. We emphasize the combinatorial connections throughout the paper.

The outline of the paper is as follows. Section 2 contains some known and  new combinatorial identities for the $S$-restricted Stirling and Bell numbers. For example, we generalize the Dobinski's formula. We also show the relation with the Riordan group.  Section 3 contains an extension of the Fubini numbers and some new identities. Section 4 presents an interesting application of the $S$-restricted partitions to the lonesum matrices. Finally, Section 4 introduces a generalization of the poly-Bernoulli numbers. 

\section{$S$-restricted Set Partitions and Stirling Numbers}

A partition of a set $[n]:= \{1, 2, \dots , n\}$ is a collection of pairwise disjoint subsets, called \emph{blocks}, whose union is $[n]$. 
For a block $B$, we let $|B|$ denote its cardinality. We denote by ${n\brace k}_S$ the number of partitions of $[n]$ having exactly $k$ blocks, with the additional restriction that the size of each block is contained in the set $S\subseteq \Z^{+}$.  We call this sequence \emph{$S$-restricted Stirling numbers of the second kind}. If $S=\{k_1, k_2, \dots \}$, then it is known that the $S$-restricted Stirling numbers of the second kind are given by
$${n\brace k}_S=\sum_{\substack{c_1k_1+c_2k_2+\cdots =n, \\ c_1+c_2+\cdots = k}}\frac{n!}{c_1!c_2!\cdots (k_1!)^{c_1}(k_2!)^{c_2}\cdots}.$$

Note that this  sequence is a  particular case of partial Bell polynomials \cite{Comtet}. Therefore, its exponential generating function is
\begin{align}\label{gfun}
\sum_{n=k}^\infty { n \brace k }_S \frac{x^n}{n!}=\frac{1}{k!}\left(\sum_{i\geq 1} \frac{x^{k_i}}{k_i!}\right)^k.
\end{align}

For $S=\Z^+$ we recover the well-known  \emph{Stirling numbers of the second kind} ${n\brace k}$. For the restriction $S=\{1, 2,  \dots, m\}$    we obtain the  \emph{restricted Stirling numbers of the second kind}  ${n\brace k}_{\leq m}$. This sequence  gives the number of partitions  of $n$ elements  into $k$ non-empty blocks, with the additional restriction that none of the blocks contains more than  $m$ elements. By setting $S=\{m, m+1, \dots\}$ we obtain the  \emph{associated  Stirling numbers of the second kind}  ${n\brace k}_{\geq m}$.   In this case the restriction is  that every block contains at least $m$ elements. Several combinatorial properties of these sequences can be found in \cite{BB,bona-2016a, choijy-2003a, choijy-2006b, Comtet, KLM, KomRam2, MS, MJR, Moll, Wakhare}.  Let $\mathcal{E}$ (resp. $\mathcal{O}$) be the set of even positive integers (resp. odd positive integers), then for $S=\mathcal{E}$ (resp. $S=\mathcal{O}$) we obtain the number of partitions of $[n]$ into non-empty blocks of even size,  (resp. odd size) denoted by ${n\brace k}_{\mathcal{E}}$ (resp. ${n\brace k}_{\mathcal{O}}$).

Let us denote by $E_m(x)$ the partial sum
$$E_m(x):=\sum_{k=0}^m \frac{x^k}{k!}.$$
From the generating function given in Equation \eqref{gfun} we obtain the following particular cases
\begin{align*}
\sum_{n=k}^\infty { n \brace k } \frac{x^n}{n!}&=\frac{1}{k!}\left(e^x-1\right)^k,\\
\sum_{n=k}^\infty { n \brace k }_{\leq m} \frac{x^n}{n!}&=\frac{1}{k!}\left(E_m(x)-1\right)^k,\\
\sum_{n=k}^\infty { n \brace k }_{\geq m} \frac{x^n}{n!}&=\frac{1}{k!}\left(e^x-E_{m-1}(x)\right)^k,\\
\sum_{n=k}^\infty { n \brace k }_{\E}\frac{x^n}{n!}&=\frac{1}{k!}\left(\cosh x-1\right)^k,\\
\sum_{n=k}^\infty { n \brace k }_{\Odd} \frac{x^n}{n!}&=\frac{1}{k!}\left(\sinh x\right)^k.
\end{align*}

In this paper we will use the symbolic method \cite{Flajolet} for deriving exponential generating functions. Let $\mathcal{A}$ and $\mathcal{B}$ be combinatorial classes with exponential generating functions
$$
A(x)=\sum_{\alpha\in \mathcal{A}}\frac{x^{|\alpha|}}{|\alpha|!}=\sum_{n\geq 0}A_n\frac{x^n}{n!}\mbox{ and }
B(x)=\sum_{\beta\in \mathcal{B}}\frac{x^{|\beta|}}{|\beta|!}=\sum_{n\geq 0}B_n\frac{x^n}{n!}.$$

$A_n$ (resp. $B_n$) is the counting sequence for objects in the class $\mathcal{A}$ (resp.~$\mathcal{B}$) with size $n$. Let $\mathcal{X}$ be the atomic class with generating function $x$. According to the theory, if we have a construction  for a class of combinatorial objects that is built up of classical basic constructions, as SET, $k$-SET, SEQ etc.; we can turn this construction automatically into a generating function for the counting sequence using the translation rules.
$S$-restricted Stirling numbers of the second kind counts the partition of an $n$ element set into $k$ non-empty blocks such that each block has size contained in $S$. For the construction of such partitions we need two basic constructions: $k$-sets and $S$-sets. The class of a $k$-set formed from $\mathcal{B}$ is a $k$-sequence modulo the equivalence relation that two sequences are equivalent when the components of one are the permutation of the components of the other. The translation rule is
$\mathcal{A}=\mbox{SET}_k(\mathcal{B})\Rightarrow A(x)=\frac{B(x)^k}{k!}$. The class of $S$-sets ($S=\{k_1,k_2,\ldots\}$) formed from $\mathcal{B}$ is the union of $k_i$-sets formed from $\mathcal{B}$. The translation rule is $\mathcal{A}=\mbox{SET}_{S}(\mathcal{B})\Rightarrow A(x)=\sum_{i\geq 1}\frac{B(x)^{k_i}}{k_i!}$. For instance, if $S=\{2,4,5\}$, $A(x)=\frac{B(x)^2}{2!}+\frac{B(x)^4}{4!}+\frac{B(x)^5}{5!}$.

Our construction for the class $S$-restricted partitions into $k$ non-empty blocks, denoted by $\mathcal{P}_{k;S}$, is
$$\mathcal{P}_{k;S}=\mbox{SET}_k(\mbox{SET}_S(\mathcal{X})).$$ The translation rules turn this construction into the generating function \eqref{gfun}.

Recently, Wakhare \cite{Wakhare} proved the following recurrence relation
\begin{align}
{n+1 \brace k }_S =\sum_{s\in S} \binom{n}{s-1}{n-s+1 \brace k-1}_S.
\end{align}
\begin{proof}
Distinguish one element in a set of $n+1$ elements. Take a partition of this set into $k$ blocks such that each block has size in $S$. The distinguished element is in one particular block which has size $s$ with $s\in S$. The other elements of this block can be chosen in $\binom{n}{s-1}$ ways and the remaining $n+1-s$ elements form a partition into $k-1$ blocks such that the size of each block is contained in $S$; for that we have ${n-s+1 \brace k-1}_S$ possibilities.
\end{proof}
It is well-known that the Stirling numbers of the second kind can be defined as the connecting coefficients between the falling factorial $(x)_n$, with $(x)_n:=x(x-1)\cdots (x-n+1)$ if $n\geq 1$ and $(x)_0=1$, and the monomials $x^n$. More concretely,
$$x^n=\sum_{k=0}^n{n \brace k}(x)_k.$$
It is possible to generalize the above identity by means of the potential polynomials (\cite[Theorem B, pp. 141]{Comtet}). Let $f_{S,t}(x)$ be the function defined by
$$f_{S,t}(x):=(1+E_S(x))^t,$$
where
$$
E_S(x)=\sum_{i\geq 1} \frac{x^{k_i}}{k_i!}.
$$
Then
 \begin{align}\label{eqbb}
\left. \frac{d^n}{dx^n}f_{S,t}(x)\right |_{x=0}:=f_{S,t}^{(n)}(0)=\sum_{k=0}^n{n \brace k}_{S} (t)_k.
\end{align}
Therefore  the polynomial $f_{S,t}^{(n)}(0)$ can be interpreted as the number of all functions $g$ from $[n]$ to $[t]$, such that $|g^{(-1)}(i)|\in S$ for any $i\in[t]$.

If we denote by $B_{n, S}$ the total number of set partitions of $[n]$ with the restriction that the size of each block is contained in  $S\subseteq \Z^{+}$, then it is clear that
$$B_{n, S}=\sum_{k=0}^n{n \brace k}_S.$$
In particular, if $S=\Z^+$ we recover the Bell numbers $B_n$ (\cite{OEIS}, A000110).
The exponential generating function of the sequence  $B_{n,S}$ is
 \begin{align}\label{gfunbell}
 \sum_{n=0}^\infty B_{n, S}\frac{x^n}{n!}&=\exp\left(\sum_{i\geq 1} \frac{x^{k_i}}{k_i!}\right).
 \end{align}
\begin{proof}
Here we can use again the symbolic method to derive this formula direct from the combinatorial interpretation of $B_{n,S}$. We denote by $\mathcal{P}_{S}$ the class of all partitions such that each block has size contained in $S$. For the construction we will need another construction: the $\mbox{SET}$ construction. A set formed from $\mathcal{B}$ is defined as the union of $k$-sets formed from $\mathcal{B}$: $$\mbox{SET}(\mathcal{B})=\{\epsilon\}\cup \mathcal{B} \cup \mbox{SET}_2(\mathcal{B})\cup \cdots =\bigcup_{k\geq 0}\mbox{SET}_k(\mathcal{B}).$$ The translation rule is $$\mathcal{A}=\mbox{SET}(\mathcal{B})\Rightarrow A(z)=\sum_{k=0}^{\infty} \frac{1}{k!}B(z)^k=e^{B(z)}.$$
We have $\mathcal{P}_S=\mbox{SET}(\mbox{SET}_S(\mathcal{X})),$ which turns into the generating function \eqref{gfunbell}.
\end{proof}

There exists a beautiful formula for the \emph{Bell numbers} (cf. \cite{Dobinski,Pitman}) called the  Dobinski's formula. It says that
$$B_n=\frac{1}{e}\sum_{k=0}^{\infty}\frac{k^n}{k!}.$$

Theorem \ref{Dobi} below generalizes the Dobinski's formula.
\begin{theorem}\label{Dobi}
Let $B_{n, S}(x)$ be the polynomial defined by
$$B_{n, S}(x)=\sum_{k=0}^n{n \brace k}_Sx^k.$$
Then
$$B_{n, S}(x)=\frac{1}{e^x}\sum_{\ell=0}^\infty f_{S, \ell}^{(n)}(0)\frac{x^\ell}{\ell!}.$$
\end{theorem}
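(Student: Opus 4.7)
The plan is to reduce the theorem to a clean identity about falling factorials. Starting from the right-hand side, I would substitute the formula \eqref{eqbb} to express $f_{S,\ell}^{(n)}(0)$ as the finite sum $\sum_{k=0}^n {n\brace k}_S(\ell)_k$. This gives
\begin{align*}
\frac{1}{e^x}\sum_{\ell=0}^\infty f_{S,\ell}^{(n)}(0)\frac{x^\ell}{\ell!}
= \frac{1}{e^x}\sum_{\ell=0}^\infty\sum_{k=0}^n {n\brace k}_S (\ell)_k \frac{x^\ell}{\ell!}.
\end{align*}

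Next, since the inner sum over $k$ is finite and uniform in $\ell$, I would interchange the order of summation to isolate the ``kernel''
\begin{align*}
\sum_{\ell=0}^\infty (\ell)_k \frac{x^\ell}{\ell!}.
\end{align*}
This is the step on which the whole argument pivots. The key observation is that $(\ell)_k = \ell!/(\ell-k)!$ when $\ell\geq k$ and vanishes otherwise, so after reindexing $m=\ell-k$ the sum collapses to $x^k\sum_{m\geq 0}x^m/m! = x^k e^x$. Thus
\begin{align*}
\frac{1}{e^x}\sum_{\ell=0}^\infty f_{S,\ell}^{(n)}(0)\frac{x^\ell}{\ell!}
= \frac{1}{e^x}\sum_{k=0}^n {n\brace k}_S \cdot x^k e^x = B_{n,S}(x),
\end{align*}
which is the claimed identity.

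Essentially no step is a real obstacle: the only thing to justify carefully is the interchange of summations, and that is immediate because the $k$-sum has finitely many terms and the resulting series $\sum_\ell (\ell)_k x^\ell/\ell!$ converges absolutely for every $x$ (in fact termwise to $x^k e^x$). If one prefers a more combinatorial flavor, one can equivalently present the proof as a generating-function identity: multiplying both sides by $e^x$ reduces the statement to showing that $x^k e^x = \sum_{\ell\geq 0}(\ell)_k x^\ell/\ell!$, which is the standard exponential generating function of $(\ell)_k$. Either presentation makes the classical Dobinski formula (recovered at $S=\Z^+$, $x=1$) a direct specialization, so I would briefly remark on this specialization at the end.
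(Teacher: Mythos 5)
Your proof is correct and follows essentially the same route as the paper's: substitute the identity $f_{S,\ell}^{(n)}(0)=\sum_{k=0}^n{n\brace k}_S(\ell)_k$, swap the order of summation, and shift the index in the $\ell$-sum to recognize $\sum_{\ell\geq k}(\ell)_k x^\ell/\ell!=x^k e^x$. The only difference is presentational (you isolate the kernel identity explicitly and comment on the interchange of sums, which the paper performs silently).
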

\begin{proof}
From Equation \eqref{eqbb}  we have
\begin{align*}
\sum_{\ell=0}^\infty f_{S, \ell}^{(n)}(0)\frac{x^\ell}{\ell!}&=\sum_{\ell=0}^\infty \sum_{k=0}^n { n \brace k }_{S} (\ell)_k\frac{x^\ell}{\ell!} = \sum_{\ell=0}^\infty \sum_{k=0}^n { n \brace k }_{S}  \binom \ell k \frac{k!}{\ell!}x^\ell\\
&= \sum_{s=0}^\infty \sum_{k=0}^n { n \brace k }_{S}  \binom{s+k}{k} \frac{k!}{(s+k)!}x^{s+k}= \sum_{s=0}^\infty \frac{x^s}{s!}\sum_{k=0}^n { n \brace k }_{S}  x^{k}\\
&=e^xB_{n, S}(x).
\end{align*}
\end{proof}
We will call the polynomials $B_{S,n}(x)$ as the \emph{$S$-restricted Bell polynomials}.

\subsection{$S$-restricted Stirling Matrix}
The \emph{$S$-restricted Stirling matrix of the second kind} is the infinite matrix defined by
$$\M_S:=\left[{n \brace k}_S\right]_{n, k \geq 0}.$$

 An infinite lower triangular matrix $L=\left[d_{n,k}\right]_{n,k\in \N}$ is called an \emph{exponential  Riordan array}, (cf. \cite{Barry}), if its column $k$ has generating function $g(x)\left(f(x)\right)^k/k!, k = 0, 1, 2, \dots$, where $g(x)$ and $f(x)$ are formal power series with $g(0) \neq 0$, $f(0)=0$ and $f'(0)\neq 0$.   The matrix corresponding to the pair $f(x), g(x)$ is denoted by  $\langle g(x),f(x)\rangle$.

If we multiply  $\langle g(x),f(x)\rangle$ by a column vector $(c_0, c_1, \dots)^T$ with the exponential generating function $h(x)$ then the resulting column vector has exponential generating function $gh(f)$. This property is known as the fundamental theorem of exponential Riordan arrays or summation property.  The product of two exponential Riordan arrays $\langle g(x),f(x)\rangle$ and $\langle h(x),\ell(x)\rangle$ is defined by:
 $$\langle g(x),f(x)\rangle *\langle h(x),\ell(x)\rangle =\left\langle g(x)h\left(f(x)\right), \ell\left(f(x)\right)\right\rangle.$$
 The set of all exponential Riordan matrices is a group  under the operator $*$ (cf.\ \cite{Barry, Riordan}).\\

  For example,  the Pascal matrix $\Pa$ (cf. \cite{Riordan}) is given by the exponential Riordan matrix
 \begin{align*}
 \Pa=\langle e^x,x\rangle=\left[\binom nk \right]_{n, k \geq 0}.
 \end{align*}

From Equation \eqref{gfun} and the definition of Riordan matrix we obtain the following theorem.
\begin{theorem}
For all $S\subseteq \Z^+$ with $1\in S$, the matrix $\M_S$ is an exponential Riordan matrix given by
$$\M_S=\left\langle 1, x + \sum_{i\geq 1} \frac{x^{k_i}}{k_i!} \right\rangle,$$
with $k_i > 1$ for all positive integer $i$.
\end{theorem}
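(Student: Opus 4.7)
The plan is essentially to unpack the definition of an exponential Riordan array and match it against the generating function \eqref{gfun}, using the hypothesis $1\in S$ to guarantee the required non-degeneracy condition on $f$.

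First, I would split the index set. Since $1\in S$, write $S=\{1\}\cup\{k_i:i\geq 1\}$ where each $k_i>1$. With this split, the sum appearing in \eqref{gfun} decomposes as
\[
\sum_{s\in S}\frac{x^{s}}{s!}=x+\sum_{i\geq 1}\frac{x^{k_i}}{k_i!}.
\]
Set $f(x):=x+\sum_{i\geq 1}\frac{x^{k_i}}{k_i!}$ and $g(x):=1$. Then \eqref{gfun} reads
\[
\sum_{n=k}^{\infty}{n\brace k}_S\frac{x^n}{n!}=\frac{1}{k!}\bigl(f(x)\bigr)^k=g(x)\frac{\bigl(f(x)\bigr)^k}{k!},
\]
which is exactly the generating function of column $k$ of the exponential Riordan array $\langle g(x),f(x)\rangle$.

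Next, I would verify the three admissibility conditions for $\langle g,f\rangle$ to be a genuine exponential Riordan array. Clearly $g(0)=1\neq 0$. Every term of $f$ has positive degree, so $f(0)=0$. Finally, since every $k_i>1$, the only contribution to $f'(0)$ comes from the leading summand $x$, giving $f'(0)=1\neq 0$. Note that this last point is precisely where the hypothesis $1\in S$ is used: without it, the smallest exponent appearing in $f$ would exceed $1$, forcing $f'(0)=0$ and disqualifying the pair from defining a Riordan array.

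There is no real obstacle here; the content of the theorem is the observation that \eqref{gfun} already has the exponential Riordan form once one isolates the $x$-term, and that $1\in S$ is exactly the combinatorial hypothesis ensuring $f'(0)\neq 0$. Combining the generating function identity with the three checks gives
\[
\M_S=\left\langle 1,\;x+\sum_{i\geq 1}\frac{x^{k_i}}{k_i!}\right\rangle,
\]
as claimed.
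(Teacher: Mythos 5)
Your proof is correct and follows exactly the route the paper intends: the paper simply asserts that the theorem follows from Equation \eqref{gfun} and the definition of an exponential Riordan array, and you have filled in the details of that observation, including the check that $1\in S$ is what guarantees $f'(0)\neq 0$. Nothing further is needed.
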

The row sum of these matrices are the $S$-restricted Bell numbers $B_{n, S}$.

From the Riordan matrix theory it is possible to obtain factorizations for the matrices $\M_S$.  Given any Riordan matrix $\langle g(x),f(x)\rangle$, it can be factorized by (cf. \cite{Cheon})
$$\langle g(x),f(x)\rangle=\langle g(x),x\rangle *\left([1]\oplus \langle f'(x),f(x)\rangle \right),$$
where $\oplus$ denotes the direct sum of two matrices.

For example,
$$\M_\Odd=\langle 1, \sinh x -1 \rangle=[1]\oplus \langle \cosh(x),\sinh(x)-1\rangle=[1]\oplus \left(\langle \cosh(x),x\rangle * \langle 1,\sinh x - 1 \rangle \right).$$

Therefore,
$$\M_{\Odd}=\prod_{\ell\geq 1}( I_\ell \oplus\overline{\Pa}),$$
where $I_\ell$ is the $\ell\times \ell$ identity matrix and $\overline{\Pa}=\langle \cosh(x),x\rangle=[c_{n,k}]_{n, k \geq 0}$ with
$$c_{n,k}=\begin{cases}
\binom nk \left(\frac{1+(-1)^{(n+k)}}{2}\right), & \text{ if }  k\geq 1; \\
\delta_{n,0},& \text{ if }  k=0;
\end{cases}$$
and $\delta_{n,k}$ is the Kronecker delta symbol.  \\

The inverse exponential Riordan array of $\M_S$ is denoted by $$\T_S:=\left[{n \brace k}_S^{-1}\right]_{n, k \geq 0}.$$

Recently, Engbers et al. \cite{Eng} gave an interesting combinatorial interpretation for the absolute  values of the entries ${n \brace k}_S^{-1}$, by means of Schr\"oder trees.

It is clear that the $S$-restricted Stirling numbers  satisfy the following orthogonality relation:
$$\sum_{i=k}^n {n \brace i}_S  {i \brace k}_S^{-1}=\sum_{i=k}^n  {n \brace i}_S^{-1} {i \brace k}_S
=\delta_{k,n}.$$

The orthogonality relations give us the inverse relations
$$f_n=\sum_{k=0}^n{n \brace k}_S^{-1} g_k  \iff g_n=\sum_{k=0}^n{n \brace k}_S f_k.$$

From definition of the polynomials $B_{n, S}(x)$ we obtain the  equality
$$X = \M_S^{-1}\mathcal{B}_S,$$
where $X=[1, x, x^2, \dots]^T$ and $\mathcal{B}_S=[B_{0,S}(x), B_{1,S}(x), B_{2,S}(x), \dots]^T$. Then  $X=\T_S\mathcal{B}_S$ and
$$x^n=\sum_{k=0}^n {n \brace k}_S^{-1} B_{k,S}(x).$$
 Therefore
 \begin{align}
 B_{n,S}(x)=x^n-\sum_{k=0}^{n-1}{n \brace k}_S^{-1} B_{k,S}(x), \quad n\geq 0.\label{iddet}
 \end{align}
 From the above identity we obtain the determinantal identity of Theorem \ref{teo3}.
 \begin{theorem}\label{teo3}
 For all $S\subseteq \Z^+$ with $1\in S$, the $S$-restricted  Bell polynomials satisfy
 $$B_{n,S}(x)=(-1)^n\begin{vmatrix}
 1 & x & & \cdots && x^{n-1} & x^n\\
  1 & {1 \brace 0}_S^{-1} & &\cdots && {n-1 \brace 0}_S^{-1} & {n \brace 0}_S^{-1}\\
    0 & 1 && \cdots && {n-1 \brace 1}_S^{-1}  & {n \brace 1}_S^{-1} \\
   \vdots &  & & \cdots& & & \vdots\\
  0 & 0  && \cdots & & 1 & {n \brace n-1}_S^{-1}\\
 \end{vmatrix}.$$
 \end{theorem}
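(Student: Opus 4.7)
The plan is to derive the determinantal identity by applying Cramer's rule to the lower triangular linear system encoded by the recursion \eqref{iddet}. I would first collect, for $k=0,1,\ldots,n$, the identity $x^k=\sum_{j=0}^k {k \brace j}_S^{-1} B_{j,S}(x)$ (stated just above \eqref{iddet}) into a matrix equation $L\vec{B}=\vec{x}$, where $\vec{B}=(B_{0,S}(x),\ldots,B_{n,S}(x))^T$, $\vec{x}=(1,x,\ldots,x^n)^T$, and $L$ is the $(n+1)\times(n+1)$ lower triangular matrix with entries $L_{k,j}={k \brace j}_S^{-1}$. Since $1\in S$ forces ${k \brace k}_S=1$, and hence ${k \brace k}_S^{-1}=1$, the matrix $L$ has unit diagonal and $\det(L)=1$.

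Cramer's rule then yields $B_{n,S}(x)=\det(L_n)$, where $L_n$ is obtained from $L$ by replacing its last column with $\vec{x}$. The remaining step is to reconcile $\det(L_n)$ with the determinant in the statement. I would transpose $L_n$, which does not change the determinant; the last row of $L_n^T$ is then $(1,x,x^2,\ldots,x^n)$, while the top $n$ rows form the first $n$ columns of $L$ turned on their side, producing an upper-triangular block with $1$'s on its diagonal. Moving this last row of $L_n^T$ to the top by $n$ adjacent row swaps produces exactly the matrix $\Delta_n$ displayed in the theorem, and multiplies the determinant by $(-1)^n$. Combining the steps, $\det(\Delta_n)=(-1)^n\det(L_n^T)=(-1)^n\det(L_n)=(-1)^n B_{n,S}(x)$, which rearranges into the claimed formula.

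The main obstacle is the index-tracking needed to confirm that the transpose-and-shift operation on $L_n$ really reproduces $\Delta_n$ entry by entry, in particular the Hessenberg pattern of $1$'s along the subdiagonal and the zeros strictly below it. This is a mechanical verification, and the only serious risk is an off-by-one error when accounting for the sign $(-1)^n$ introduced by the row shift; the proof requires no further combinatorial insight beyond the linear-algebraic content already present in \eqref{iddet}.
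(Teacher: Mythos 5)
Your proof is correct. It rests on the same foundation as the paper's argument, namely the unitriangular system $x^k=\sum_{j=0}^{k}{k \brace j}_S^{-1}B_{j,S}(x)$ behind Equation \eqref{iddet}, but you package the linear algebra differently: the paper expands the displayed determinant along its last column and feeds the resulting cofactors back into \eqref{iddet} (implicitly an induction on $n$, using that each minor obtained by deleting the row of ${\,\cdot\, \brace i}_S^{-1}$ and the last column block-triangularizes into the smaller determinant for $B_{i,S}(x)$ times a unitriangular block), whereas you solve the whole system at once by Cramer's rule and then convert $\det(L_n)$ into the stated Hessenberg determinant by a transpose and a cyclic shift of the last row to the top, which cleanly accounts for the factor $(-1)^n$. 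Your route avoids the induction and the bookkeeping of cofactor signs at the cost of the one-time verification that $L_n^{T}$, after the shift, matches the displayed matrix entry by entry; that verification is indeed mechanical and goes through (the subdiagonal $1$'s are the entries ${i \brace i}_S^{-1}=1$, which is where the hypothesis $1\in S$ is used, exactly as you note). Both arguments are equally rigorous; yours is arguably the more transparent derivation of where the determinant and the sign come from.
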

 \begin{proof}
This identity follows from Equation \eqref{iddet} and by expanding the determinant by the last column.
 \end{proof}

 For example, if $S=\{1, 3, 6\}$ then
$$\M_{\{1, 3, 6 \}}=\left\langle 1, x + \frac{x^3}{3!} + \frac{x^6}{6!} \right\rangle=
\left(
\begin{array}{ccccccccc}
 1 & 0 & 0 & 0 & 0 & 0 & 0 & 0 & 0 \\
 0 & 1 & 0 & 0 & 0 & 0 & 0 & 0 & 0 \\
 0 & 0 & 1 & 0 & 0 & 0 & 0 & 0 & 0 \\
 0 & 1 & 0 & 1 & 0 & 0 & 0 & 0 & 0 \\
 0 & 0 & 4 & 0 & 1 & 0 & 0 & 0 & 0 \\
 0 & 0 & 0 & 10 & 0 & 1 & 0 & 0 & 0 \\
 0 & 1 & 10 & 0 & 20 & 0 & 1 & 0 & 0 \\
 0 & 0 & 7 & 70 & 0 & 35 & 0 & 1 & 0 \\
 0 & 0 & 0 & 28 & 280 & 0 & 56 & 0 & 1 \\
  \vdots  &  &  &  & \vdots &  &  &  & \vdots \\
\end{array}
\right),$$
and
$$\T_{\{1, 3, 6\}}=\left[{n \brace k}_{\{1, 3, 6\}}^{-1}\right]_{n, k \geq 0}=
\left(
\begin{array}{ccccccccc}
 1 & 0 & 0 & 0 & 0 & 0 & 0 & 0 & 0 \\
 0 & 1 & 0 & 0 & 0 & 0 & 0 & 0 & 0 \\
 0 & 0 & 1 & 0 & 0 & 0 & 0 & 0 & 0 \\
 0 & -1 & 0 & 1 & 0 & 0 & 0 & 0 & 0 \\
 0 & 0 & -4 & 0 & 1 & 0 & 0 & 0 & 0 \\
 0 & 10 & 0 & -10 & 0 & 1 & 0 & 0 & 0 \\
 0 & -1 & 70 & 0 & -20 & 0 & 1 & 0 & 0 \\
 0 & -280 & -7 & 280 & 0 & -35 & 0 & 1 & 0 \\
 0 & 84 & -2800 & -28 & 840 & 0 & -56 & 0 & 1 \\
\end{array}
\right).$$
The first few $\{1, 3, 6\}$-restricted Bell polynomials are
$$1, \, x,\, x^2,\, x^3+x,\, x^4+4x^2,\, x^5+10 x^3, \, x^6+20 x^4+10
   x^2+x, \, x^7+35 x^5+70 x^3+7 x^2, \dots$$
   In particular,
   \begin{align*}
   B_{6, \{1, 3, 6\}}(x)=x^6+20 x^4+10x^2+x =\begin{vmatrix}
 1 & x & x^2 & x^3 & x^4 & x^{5} & x^6\\
 1 & 0 & 0 & 0 & 0 & 0 & 0\\
    0 & 1 & 0& -1 & 0& 10  & -1 \\
    0 & 0 & 1& 0 & -4& 0  & 70 \\
    0 & 0 & 0&1 & 0& -10  & 0 \\
     0 & 0 & 0&0 & 1& 0  & -20 \\
     0 & 0 & 0&0 & 0& 1  & 0  \end{vmatrix}.
 \end{align*}

\section{$S$-restricted Ordered Partitions and Fubini Numbers}

The \emph{Fubini numbers}, $F_n$  (\cite{OEIS}, A000670), or also called ordered Bell numbers count the number of set partitions of $[n]$ such that the order of the blocks matters.  The Fubini numbers are given by
$$F_n=\sum_{k=0}^n k! { n \brace k}.$$
Analogously, we introduce the \emph{$S$-restricted Fubini numbers},  $F_{n,S}$, as the number of ordered set partitions of $[n]$ such that the block sizes are contained in $S$. Therefore,
\begin{align}\label{SFubini}
F_{n,S}=\sum_{k=0}^n k! { n \brace k}_S.
\end{align}
The Fubini numbers also count the number of rankings of $n$ candidates with ties allowed. It is easy to see: the $n$ candidates are first to be partitioned into equivalence classes and then the equivalence classes are to be linearly ordered. A combinatorial interpretation of the $S$-restricted Fubini number is the number of ranking $n$ candidates such that the possible number of candidates that are tied is in a given set $S$. We note here that in \cite{NM} the authors derive some connections between preferential arrangements and multi-poly-Bernoulli numbers.
\begin{theorem}\label{gfunfubini}
The exponential generating function for the $S$-restricted Fubini numbers is
\begin{align}\label{gfunSfubini}
\sum_{n=0}^\infty F_{n, S}\frac{x^n}{n!}=\frac{1}{1-E_S(x)}.
\end{align}
\end{theorem}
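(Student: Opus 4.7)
The plan is to derive the generating function in two ways, both very short. The first approach is a direct manipulation of the defining expression for $F_{n,S}$. I would substitute $F_{n,S}=\sum_{k=0}^n k!\,{n\brace k}_S$ into the left-hand side of \eqref{gfunSfubini} and interchange the order of summation:
$$\sum_{n=0}^\infty F_{n,S}\frac{x^n}{n!} \;=\; \sum_{k=0}^\infty k!\sum_{n=k}^\infty {n\brace k}_S\frac{x^n}{n!}.$$
Now I would invoke the generating function \eqref{gfun}, which gives $\sum_{n=k}^\infty {n\brace k}_S\frac{x^n}{n!} = E_S(x)^k/k!$. The factorials cancel, leaving $\sum_{k\geq 0} E_S(x)^k$. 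Because $E_S(0)=0$, this geometric series makes sense as a formal power series and sums to $1/(1-E_S(x))$, as claimed.

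The second approach uses the symbolic method in the spirit of the earlier proofs in the paper. I would describe an ordered $S$-restricted set partition of $[n]$ as a (possibly empty) \emph{sequence} of non-empty blocks whose sizes all lie in $S$. Denoting the class of such objects by $\mathcal{F}_S$, this gives the specification
$$\mathcal{F}_S \;=\; \mbox{SEQ}\bigl(\mbox{SET}_S(\mathcal{X})\bigr).$$
The translation rule for SEQ sends a class with EGF $B(x)$ to one with EGF $1/(1-B(x))$, and the class $\mbox{SET}_S(\mathcal{X})$ has EGF $E_S(x)$ by the rule recalled in Section~2. Composing these rules gives \eqref{gfunSfubini} directly.

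There is essentially no obstacle here: the identity is a routine consequence of either the algebraic formula \eqref{gfun} or the symbolic construction. The only point worth flagging is the convergence issue in the geometric sum in the first proof, which is resolved by working in $\mathbb{Q}[[x]]$ and using the vanishing of $E_S$ at $0$. I would present the symbolic derivation as the primary proof, since it matches the paper's stylistic preference for combinatorial constructions, and optionally remark that the algebraic manipulation yields the same result.
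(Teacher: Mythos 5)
Your proposal is correct and matches the paper's own proof essentially exactly: the paper also gives both the algebraic interchange-of-summation argument via \eqref{gfun} and the symbolic specification $\mbox{SEQ}(\mbox{SET}_S(\mathcal{X}))$. Your added remark about formal convergence of the geometric series (using $E_S(0)=0$) is a sensible clarification but not a departure from the paper's route.
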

\begin{proof}
From  Equations  \eqref{SFubini} and \eqref{gfun} we have
\begin{align*}
\sum_{n=0}^\infty F_{n, S}\frac{x^n}{n!}&=\sum_{n=0}^\infty \sum_{k=0}^n k! { n \brace k}_S \frac{x^n}{n!}=\sum_{k=0}^\infty k!\sum_{n=k}^\infty  { n \brace k}_S \frac{x^n}{n!}\\
&=\sum_{k=0}^\infty\left(\sum_{i\geq 1} \frac{x^{k_i}}{k_i!}\right)^k = \frac{1}{1-E_S(x)}.
\end{align*}

For a combinatorial proof we can use again the symbolic method. For the construction for the class of $S$-restricted ordered partitions, denoted by $\mathcal{OP}_S$, we will need the basic construction of a sequence. A sequence formed from $\mathcal{B}$ is a union of any power of $\mathcal{B}$; where the $k$th power of $\mathcal{B}$ is the (labeled) product of $\mathcal{B}$ with $k$ factors: $$\mbox{SEQ}(\mathcal{B}):=\{\epsilon\}\cup \mathcal{B}\cup \mathcal{B}\times \mathcal{B}\cup\cdots=\bigcup_{k\geq 0}\mbox{SEQ}_k(\mathcal{B}).$$ We have
$$
\mathcal{OP}_S=\mbox{SEQ}(\mbox{SET}_{S}(\mathcal{X})).
$$
The translation rules turn this construction into the generating function \eqref{gfunSfubini}.
\end{proof}
From the theorem above we obtain the following particular cases
 \begin{align*}
 \sum_{n=0}^\infty F_{n} \frac{x^n}{n!}&=\frac{1}{2-e^x}, \\
 \sum_{n=0}^\infty F_{n,\leq m} \frac{x^n}{n!}&=\frac{1}{1-x-\frac{x^2}{2!}-\cdots-\frac{x^m}{m!}}, \\
\sum_{n=0}^\infty F_{n,\geq m} \frac{x^n}{n!}&=\frac{1}{1-\frac{x^m}{m!}-\frac{x^{m+1}}{(m+1)!}-\cdots},\\
\sum_{n=0}^\infty F_{n,\E} \frac{x^n}{n!}&=\frac{1}{2-\cosh x},\\
\sum_{n=0}^\infty F_{n,\Odd} \frac{x^n}{n!}&=\frac{1}{1-\sinh x}.
 \end{align*}

Some determinantal identities for the incomplete Fubini numbers $ F_{n,\leq m}$ and $F_{n,\geq m}$ were given in \cite{KomRam}.

 For the Fubini numbers there is a formula similar to Dobinski's formula. It says that (cf. \cite{Pip})
 $$F_n=\frac{1}{2}\sum_{k=0}^\infty \frac{k^n}{2^k}.$$

In the following theorem we give a generalization of this equation.

\begin{theorem}\label{teo5}
We have the combinatorial identity
$$F_{n, S}=\frac 12 \sum_{k=0}^\infty \frac{1}{2^k}\sum_{\ell=0}^n {n \brace \ell}_S(k)_\ell.$$
\end{theorem}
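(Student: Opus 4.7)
The plan is to start from the right-hand side, interchange the order of summation, and then evaluate the resulting inner sum in closed form. This mirrors the standard derivation of the classical Dobinski-type identity $F_n = \frac{1}{2}\sum_{k\geq 0}k^n/2^k$, which should pop out as the special case $S=\Z^+$ because $\sum_{\ell=0}^n {n \brace \ell}(k)_\ell = k^n$.

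First I would swap the two sums. Since $\ell$ only runs up to $n$, the outer sum is finite, and for each fixed $\ell$ the inner series $\sum_{k\geq 0}(k)_\ell/2^k$ converges absolutely (the summand is dominated by a constant multiple of $k^\ell/2^k$, which is summable by the ratio test). Hence Fubini/Tonelli legitimates the interchange, leaving
$$
\frac{1}{2}\sum_{\ell=0}^n {n \brace \ell}_S \sum_{k=0}^\infty \frac{(k)_\ell}{2^k}.
$$

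The second step is to identify the inner sum in closed form. Differentiating the geometric series $\sum_{k\geq 0} x^k = 1/(1-x)$ exactly $\ell$ times yields
$$
\sum_{k\geq \ell}(k)_\ell\, x^{k-\ell}=\frac{\ell!}{(1-x)^{\ell+1}},
$$
so multiplying by $x^\ell$ and evaluating at $x=1/2$ gives $\sum_{k\geq 0}(k)_\ell/2^k = (1/2)^\ell\ell!\cdot 2^{\ell+1} = 2\,\ell!$. Substituting back, the prefactor $\tfrac12$ and the factor $2$ cancel, and the expression collapses to $\sum_{\ell=0}^n \ell!\,{n \brace \ell}_S$, which by Equation~\eqref{SFubini} is exactly $F_{n,S}$.

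There is no real obstacle: once the swap of sums is justified and the derivative-of-geometric-series evaluation is recorded, the algebra is immediate. The only thing worth being careful about is writing the derivative identity in the form in which $(k)_\ell$ already encodes the fact that terms with $k<\ell$ vanish, so that the lower summation limit in the statement ($k=0$) and in the derived identity ($k=\ell$) agree without any boundary issue.
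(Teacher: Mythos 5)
Your proof is correct, but it takes a genuinely different route from the paper. You work entirely on the level of the double sum: after justifying the interchange of summation, you evaluate the inner series in closed form, $\sum_{k\ge 0}(k)_\ell/2^k = 2\,\ell!$, via the $\ell$-th derivative of the geometric series, and the identity collapses to the defining formula $F_{n,S}=\sum_{\ell}\ell!\,{n\brace \ell}_S$. The paper instead argues through exponential generating functions: it rewrites $\frac{1}{1-E_S(x)}$ as $\frac12\sum_{k\ge 0}2^{-k}\bigl(1+E_S(x)\bigr)^k=\frac12\sum_{k\ge 0}2^{-k}f_{S,k}(x)$ and then extracts the coefficient of $x^n$ using the potential-polynomial identity $f_{S,k}^{(n)}(0)=\sum_{\ell}{n\brace \ell}_S(k)_\ell$. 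Your argument is more elementary and self-contained (no generating functions, only the geometric series and the definition of $F_{n,S}$), and it makes the convergence issue explicit, which the paper glosses over. The paper's route has the advantage that the expansion in terms of $f_{S,k}$ is exactly the machinery reused in the proof of the following theorem (the Poonen-type identity), so the two proofs there form a coherent package. Both are valid; yours would serve equally well as a standalone proof of this theorem.
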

\begin{proof}
From Theorem \ref{gfunfubini} we have
\begin{align*}
\sum_{n=0}^\infty F_{n, S}\frac{x^n}{n!}&=\frac{1}{1-E_S(x)}=\frac{1}{2\left(1-\frac 12 \left(1+E_S(x)\right)\right)}\\
&=\frac{1}{2} \sum_{k=0}^\infty \frac{1}{2^k} \left(1+E_S(x)\right)^k=\frac{1}{2} \sum_{k=0}^\infty \frac{1}{2^k}f_{S,k}(x).
\end{align*}
Since
$$[x^n]f_{S,k}(x)=\frac{1}{n!}\left.\frac{d^n}{dx^n}f_{S,k}(x)\right |_{x=0}=\frac{1}{n!}\sum_{\ell=0}^n{n \brace \ell}_{S} (k)_\ell,
$$
then
\begin{align*}
\sum_{n=0}^\infty F_{n, S}\frac{x^n}{n!}&=\frac{1}{2} \sum_{k=0}^\infty \frac{1}{2^k} \sum_{n=0}^\infty\left(\frac{1}{n!}\sum_{\ell=0}^n{n \brace \ell}_{S} (k)_\ell \right)x^n= \sum_{n=0}^\infty \left(\frac{1}{2} \sum_{k=0}^\infty \frac{1}{2^k}\sum_{\ell=0}^n{n \brace \ell}_{S} (k)_\ell \right)\frac{x^n}{n!}.
\end{align*}
By comparing the $n$-th coefficients we obtain the desired result.
\end{proof}

 Some other identities involving Fubini numbers occur in the literature. For example,  Poonen \cite{Poonen} proved by induction that
  \begin{align*}
(2^q-1)F_n&=\sum_{\ell=0}^{n-1}q^{n-\ell}\binom n \ell F_\ell + \sum_{\ell=1}^{q-1}2^{q-\ell-1}\ell^n, \quad n, q \geq 0.
\end{align*}
Recently, Diagana and Ma\"iga \cite{DM} gave a proof by means of  Laplace transform associated with a $p$-adic measure. This  identity can be generalized for the $S$-restricted Fubini numbers.

\begin{theorem}
Let $n$ and $q$ be positive integers. Then the $S$-restricted Fubini numbers satisfy the identity
\begin{align*}
2^qF_{n,S}=\sum_{\ell=0}^n\binom n\ell F_{\ell, S}\left(\sum_{i=0}^{n-\ell}{n-\ell \brace i }_{S}(q)_i\right) + \sum_{\ell=1}^{q-1}2^{q-\ell-1}\left(\sum_{i=0}^{n}{n\brace i }_{S}(\ell)_i\right).
\end{align*}
\end{theorem}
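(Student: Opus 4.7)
The plan is to lift the generating-function identity from the proof of Theorem~\ref{teo5} by multiplying through by $2^q$ and splitting the resulting series at index $q$. That proof hinged on the expansion
\begin{equation*}
\frac{1}{1-E_S(x)}=\frac{1}{2}\sum_{k\geq 0}\frac{(1+E_S(x))^k}{2^k},
\end{equation*}
so I would begin by multiplying by $2^q$ and writing the outcome as a head $\sum_{k=0}^{q-1}2^{q-k-1}(1+E_S(x))^k$ plus a tail $\sum_{k\geq q}2^{q-k-1}(1+E_S(x))^k$.

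For the tail, the substitution $j=k-q$ factors out $(1+E_S(x))^q$ and returns the original geometric series of Theorem~\ref{teo5}, so the tail collapses to $f_{S,q}(x)/(1-E_S(x))$. Isolating the $k=0$ term from the head yields the pure constant $2^{q-1}$, and therefore
\begin{equation*}
\frac{2^q}{1-E_S(x)}=2^{q-1}+\sum_{k=1}^{q-1}2^{q-k-1}f_{S,k}(x)+\frac{f_{S,q}(x)}{1-E_S(x)}.
\end{equation*}

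Now I would extract $n!\,[x^n]$ from both sides. Because $n\geq 1$, the additive constant $2^{q-1}$ disappears. The left side contributes $2^qF_{n,S}$ by \eqref{gfunSfubini}. Applying Equation \eqref{eqbb} to $f_{S,k}^{(n)}(0)$ converts the finite sum $\sum_{k=1}^{q-1}2^{q-k-1}f_{S,k}(x)$ into the second sum of the theorem. For the last term, the exponential-convolution rule gives
\begin{equation*}
n!\,[x^n]\frac{f_{S,q}(x)}{1-E_S(x)}=\sum_{\ell=0}^n\binom{n}{\ell}F_{\ell,S}\,f_{S,q}^{(n-\ell)}(0),
\end{equation*}
and one further use of \eqref{eqbb} rewrites $f_{S,q}^{(n-\ell)}(0)=\sum_{i=0}^{n-\ell}{n-\ell\brace i}_S(q)_i$, recovering the first sum of the theorem.

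The manipulations are routine; the only subtle point is the observation that the discrepancy constant $2^{q-1}$ is invisible at positive orders of $x$, which is exactly why the identity must be stated for positive integers $n$ (a direct term-by-term equality of the generating functions would fail at $n=0$).
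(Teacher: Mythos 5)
Your proof is correct and follows essentially the same route as the paper: the paper also splits the geometric expansion $\frac12\sum_{k\ge 0}2^{-k}(1+E_S(x))^k$ at index $q$, factors $f_{S,q}$ out of the tail, and identifies the resulting convolution with $\sum_{\ell}\binom{n}{\ell}F_{\ell,S}f_{S,q}^{(n-\ell)}(0)$ via Equation \eqref{eqbb} (the paper just phrases this coefficient-wise, using the Leibniz rule for $f_{S,k+q}^{(n)}(0)$, rather than at the generating-function level). Your closing remark correctly pinpoints why $n\ge 1$ is needed, namely the stray constant $2^{q-1}$ at order $x^0$.
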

\begin{proof}
From Theorem \ref{teo5} and Equation \eqref{eqbb} we  have
\begin{align*}
F_{n,S}&=\frac12\sum_{k=0}^{q-1} \frac{1}{2^k}\sum_{\ell=0}^n{n \brace \ell}_{S}(k)_\ell + \frac12\sum_{k=q}^\infty \frac{1}{2^k}\sum_{\ell=0}^n{n \brace \ell}_{S}(k)_\ell\\
&=\frac12\sum_{k=0}^{q-1} \frac{1}{2^k}\sum_{\ell=0}^n{n \brace \ell}_{S}(k)_\ell + \frac{1}{2^{q+1}}\sum_{k=0}^\infty \frac{1}{2^k}\sum_{\ell=0}^n{n \brace \ell}_{S}(k+q)_\ell\\
&=\frac12\sum_{k=0}^{q-1} \frac{1}{2^k}\sum_{\ell=0}^n{n \brace \ell}_S (k)_\ell + \frac{1}{2^{q+1}}\sum_{k=0}^\infty\frac{1}{2^k} f_{S,k+q}^{(n)}(0)\\
&=\frac12\sum_{k=0}^{q-1} \frac{1}{2^k}\sum_{\ell=0}^n{n \brace \ell}_{S}(k)_\ell + \frac{1}{2^{q+1}}\sum_{k=0}^\infty\frac{1}{2^k} \sum_{j=0}^n\binom nj f_{S,k}^{(j)}(0) f_{S,q}^{(n-j)}(0)\\
&=\frac12\sum_{k=0}^{q-1} \frac{1}{2^k}\sum_{\ell=0}^n{n \brace \ell}_{S}(k)_\ell + \frac{1}{2^{q}}\sum_{j=0}^n\binom nj \left( \frac 12\sum_{k=0}^\infty \frac{1}{2^k} f_{S,k}^{(j)}(0)\right) f_{S,q}^{(n-j)}(0)\\
&=\frac12\sum_{k=0}^{q-1} \frac{1}{2^k}\sum_{\ell=0}^n{n \brace \ell}_{S}(k)_\ell + \frac{1}{2^{q}}\sum_{j=0}^n\binom nj F_{j,S} \left(\sum_{i=0}^{n}{n-j\brace i }_{S}(q)_i\right).
\end{align*}
\end{proof}

From the theorem above we obtain the following congruence
\begin{align*}
(2^q-1)F_{n,S}\equiv\sum_{\ell=1}^{q-1}2^{q-\ell-1}\left(\sum_{i=0}^{n}{n\brace i }_{S}(\ell)_i \right) \pmod q.\end{align*}

\section{$S$-restricted Lonesum Matrices}

\subsection{The structure of a lonesum matrix}

\emph{Lonesum matrices} are by definition $(0,1)$ matrices that are uniquely reconstructible from their column and row sum vectors. The investigation of lonesum matrices started at the birth of discrete tomography, the research of the study of reconstruction of objects from partial informations about them, as from their projections. Lonesum matrices are in bijection with several other combinatorial objects, as for instance the natural decoding of an acyclic orientation of a complete bipartite graph $K_{n,k}$ is a $n\times k$ lonesum matrix. In \cite{BD,BH1,BH2} we find further corresponding combinatorial objects and bijections.

Ryser \cite{Ryser} described some basic properties of these matrices. Lonesum matrices avoid both $2\times 2$ permutation matrices. (A matrix $A$ \emph{avoids} a matrix $B$ if none of the submatrix of $A$ is equal to $B$.) It was also shown that two columns (resp. rows) of a lonesum matrix with the same column sum (resp. row sum) is the same (the positions of the 1 entries are the same). Moreover, for two columns $C_1$ and $C_2$ with column sums $|C_1|<|C_2|$ it is true that in the positions of the 1 entries of the column $C_1$ there are also 1 entries in the column $C_2$. (The same is true for rows.) Hence, if we order the different columns (resp. rows) with at least one 1 entry of a lonesum matrix according their sum: $\{C_1,C_2,\ldots, C_m\}$, two consecutive columns $C_i$ and $C_{i+1}$ differs only in the positions, where $C_i$ has $0$s while $C_{i+1}$ has $1$s. So, the different columns are uniquely determined by an ordered partition: $OP=\{B_1,\ldots, B_m\}$, where $B_1$ contains the positions of 1s in $C_1$ and for $i>1$ $B_i$ consists of the positions of 1s in $C_{i+1}$ in that $C_i$ has $0$s. (The same argumentation works for rows.) It is easy to see, that the number of different columns and rows with at least one 1 entry has to be equal. 

We conclude that the structure of the lonesum matrices can be described as follows: it contains some all-zero columns and all-zero rows; and the remainder of the matrix can be encoded by an ordered partition of the row indices and an ordered partition of the column indices, where both partitions have the same number of blocks.

\begin{example}
Let $n=k=8$. Assume that there is no all-zero column and all zero rows. Let  $P_R=\{1,8\},\{5,6\},\{2,7\},\{3,4\}$ be an ordered partition of $[n]$ and \linebreak $P_C$=$\{2,5\},\{4,7\},\{1,3\},\{6,8\}$ be an ordered partition of $[k]$. The lonesum matrix corresponding to $(P_R, P_C)$ is $M$:
\begin{align*}M=
\begin{pmatrix}
0&1&0&0&1&0&0&0\\
1&1&1&1&1&0&1&0\\
1&1&1&1&1&1&1&1\\
1&1&1&1&1&1&1&1\\
0&1&0&1&1&0&1&0\\
0&1&0&1&1&0&1&0\\
1&1&1&1&1&0&1&0\\
0&1&0&0&1&0&0&0
\end{pmatrix}
\end{align*}
\end{example}

Let $\Lo(n,k)$ denote the set of lonesum matrices with $n$ rows and $k$ columns. To keep our formula simple, we introduce a ``dummy'' element to both sets $[n]$ and $[k]$; and  we partition these modified sets into $(m+1)$ non-empty blocks. The row indices and column indices that share the same block with the dummy elements, will be the all-zero rows and all zero columns. The remaining  $m$ ``ordinary'' blocks of $[n]$ (as well as of $[k]$) have to be ordered. Hence, we have \cite{Brew}:
$$
|\Lo(n,k)|=\sum_{m=0}^{\min(n,k)}m!{ {n+1} \brace {m+1} }m!{ {k+1} \brace {m+1}}.
$$
These numbers are the poly-Bernoulli numbers with negative $k$ indices. Poly-Bernoulli numbers and their generalizations and analogues have a rich literature. In a later section we consider the generalization/restriction of poly-Bernoulli numbers itself from the analytic point of view using another well-known formula for them.

\subsection{$S$-restricted lonesum matrices}
Next we consider lonesum matrices with restriction on the number of the columns and rows of the same type. \emph{The type} of two columns is the same if the columns coincide in each position. The first author of this paper investigated the case when the number of the columns and rows is at most $d$ in \cite{Benyi}. In this paper we study a more general case.

We call a lonesum matrix \emph{$(S_1,S_2)$-restricted} if the number of columns of the same type are contained in the set $S_1=\{k_1, k_2, \dots \}$, and the number of rows of the same type are contained in $S_2=\{s_1, s_2, \dots \}$. We let $\Lo_{S_1,S_2}$ denote the set of $(S_1,S_2)$-restricted lonesum matrices without all-zero columns and rows. Further, we let $\Lo_{S_1,S_2}(n,k)$ denote the set of $(S_1,S_2)$-restricted lonesum matrices with $n$ rows and $k$ columns without all-zero rows and columns.

\begin{theorem} We have
$$|\Lo_{S_1,S_2}(n,k)|=\sum_{m=0}^{\min(n,k)}m! {n \brace m}_{S_1} m! {k \brace m}_{S_2}.$$
\end{theorem}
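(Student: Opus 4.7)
The plan is to apply the structural description of lonesum matrices established in the previous subsection. According to that description, a matrix in $\Lo(n,k)$ having no all-zero row or column is uniquely encoded by a pair of ordered set partitions, one of the row index set $[n]$ and one of the column index set $[k]$, both with the same number of blocks $m$; moreover, under this encoding, two row (respectively column) indices belong to the same block precisely when the corresponding rows (respectively columns) of the matrix coincide. Consequently, the block sizes of the $[n]$-partition record the multiplicities of the distinct row-types of the matrix, and the block sizes of the $[k]$-partition record the multiplicities of the distinct column-types.

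With this identification, I would fix the common block count $m$, which ranges from $0$ up to $\min(n,k)$. By the bijection, the $(S_1,S_2)$-restriction becomes a condition on block sizes: the $[n]$-partition must have all block sizes in $S_1$ and the $[k]$-partition must have all block sizes in $S_2$. The number of unordered partitions of $[n]$ into $m$ blocks whose sizes lie in $S_1$ is ${n\brace m}_{S_1}$ by definition, and the $m$ blocks can then be ordered in $m!$ ways; the analogous count applies for $[k]$. Since the row-side and column-side choices are independent once $m$ is fixed, the contribution at that level is $m!\,{n\brace m}_{S_1}\cdot m!\,{k\brace m}_{S_2}$.

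Summing over all $m$ from $0$ (the trivial matrix with no non-zero row or column) to $\min(n,k)$ yields the claimed closed form. The argument is essentially just an application of the product and sum rules once the structural bijection is in place. The only thing requiring care is the verification that the block-size statistic on each ordered partition really does coincide with the multiplicity statistic used in the definition of an $(S_1,S_2)$-restricted lonesum matrix; this is transparent from the explicit matrix-reconstruction rule recalled in the previous subsection, so I anticipate no substantive obstacle beyond bookkeeping.
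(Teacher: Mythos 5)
Your argument is correct and follows essentially the same route as the paper: both rely on the bijection between lonesum matrices without all-zero rows and columns and pairs of ordered partitions of $[n]$ and $[k]$ with a common number of blocks, together with the observation that block membership in these partitions records coincidence of row (resp.\ column) types, so that the $(S_1,S_2)$-restriction translates directly into block-size restrictions counted by $m!\,{n\brace m}_{S_1}$ and $m!\,{k\brace m}_{S_2}$. Your write-up is in fact somewhat more explicit than the paper's about why the block-size statistic equals the type-multiplicity statistic, which is the only point needing care.
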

\begin{proof}
From the characterization described above it follows that a pair of ordered partitions determines a lonesum matrix. Let consider this bijection in more detail. The partition of the row indices determines the columns. It is also true, that if two row indices $r_1$ and $r_2$ are in the same block, then the rows $R_1$ and $R_2$ have to be of the same type. Hence, restrictions on the sizes of the blocks of the partition of the set of row indices correspond to restrictions on the number of rows of the same type in the lonesum matrix. Similarly, restrictions on the sizes of the blocks of the partition of the set of column indices correspond to restrictions on the number of columns of the same type. The statement follows.
\end{proof}

The set $\Lo_{S_1,S_2}(n,k)$ also may be empty. It depends on the relation between the numbers: $n$, $k$ and the elements of $S_1$ and $S_2$ whether a lonesum matrix with such a restriction exists or not. For instance, the set $\Lo_{S_1,S_2}(n,k)$ is empty if $S_1=S_2=\{1,2,\ldots, d\}$ and $k<nd$.
Let $L_{S_1,S_2}(x,y)$ denote the double exponential generating function of $(S_1,S_2)$-restricted lonesum matrices without all-zero
columns and rows.
$$
L_{S_1,S_2}(x,y)=\sum_{M\in \Lo_{S_1,S_2}}\frac{x^{r(M)}}{r(M)!}\frac{y^{c(M)}}{c(M)!}.
$$
\begin{theorem}We have
\begin{align}\label{gfunlonesum}
L_{S_1,S_2}(x,y)=\frac{1}{1-E_{S_1}(x)E_{S_2}(y)}.
\end{align}
\end{theorem}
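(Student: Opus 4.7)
The plan is to compute $L_{S_1,S_2}(x,y)$ directly from the enumeration formula of the preceding theorem and then sum a geometric series; as a cross-check I would also give the symbolic-method derivation, since the generating function factors very naturally in the bijective picture.

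\textbf{Analytic route.} First I would substitute the formula
$$|\Lo_{S_1,S_2}(n,k)|=\sum_{m=0}^{\min(n,k)} m!\,{n\brace m}_{S_1}\,m!\,{k\brace m}_{S_2}$$
into the definition of $L_{S_1,S_2}(x,y)$. Interchanging the order of summation (the triple series converges formally), I would pull the index $m$ outside and separate the sums over $n$ and $k$:
\begin{align*}
L_{S_1,S_2}(x,y)
&=\sum_{m\geq 0}(m!)^{2}\Bigl(\sum_{n\geq m}{n\brace m}_{S_1}\frac{x^{n}}{n!}\Bigr)\Bigl(\sum_{k\geq m}{k\brace m}_{S_2}\frac{y^{k}}{k!}\Bigr).
\end{align*}
The next step is to invoke the exponential generating function \eqref{gfun} for each inner sum, which gives $\frac{1}{m!}E_{S_1}(x)^{m}$ and $\frac{1}{m!}E_{S_2}(y)^{m}$ respectively. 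The two factors of $m!$ cancel the two factorials coming from the ordering of the blocks, leaving
$$L_{S_1,S_2}(x,y)=\sum_{m\geq 0}\bigl(E_{S_1}(x)\,E_{S_2}(y)\bigr)^{m}=\frac{1}{1-E_{S_1}(x)E_{S_2}(y)},$$
which is the claimed identity.

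\textbf{Symbolic cross-check.} Alternatively, the structure theorem recalled just before the previous theorem shows that an element of $\Lo_{S_1,S_2}$ is encoded, for some $m\geq 0$, by a pair consisting of an ordered $S_1$-restricted partition of the row indices into $m$ blocks and an ordered $S_2$-restricted partition of the column indices into $m$ blocks. In the language of bivariate labelled classes (with $x$ marking rows and $y$ marking columns), this reads
$$\Lo_{S_1,S_2}\;\cong\;\mathrm{SEQ}\bigl(\mathrm{SET}_{S_1}(\mathcal{X}_x)\times\mathrm{SET}_{S_2}(\mathcal{X}_y)\bigr),$$
where the inner product is the labelled product in the two separate label sets. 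The $\mathrm{SET}_{S_i}$ translation rule gives $E_{S_i}$, the product gives the factor $E_{S_1}(x)E_{S_2}(y)$, and the outer $\mathrm{SEQ}$ yields $1/(1-E_{S_1}(x)E_{S_2}(y))$, matching \eqref{gfunlonesum}.

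\textbf{Where the subtlety lies.} No single step is technically hard; the only real care is in the bookkeeping of the two factors of $m!$ coming from the orderings of the blocks and making sure they exactly cancel the $1/m!$ factors present in \eqref{gfun}. Everything else is routine series manipulation once one has the enumeration formula of the previous theorem in hand.
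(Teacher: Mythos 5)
Your proposal is correct. The paper proves this theorem purely by the symbolic method: it writes $\Lo_{S_1,S_2}=\mbox{SEQ}\left(\mbox{SET}_{S_1}(\mathcal{X})\times \mbox{SET}_{S_2}(\mathcal{Y})\right)$ and applies the translation rules, which is exactly your ``symbolic cross-check.'' Your primary, analytic route is a genuinely different (and equally valid) derivation: it takes the counting formula $|\Lo_{S_1,S_2}(n,k)|=\sum_{m} m!{n\brace m}_{S_1} m!{k\brace m}_{S_2}$ from the preceding theorem as input, interchanges summations, invokes \eqref{gfun} to collapse each inner sum to $\frac{1}{m!}E_{S_i}(\cdot)^m$, and sums the resulting geometric series. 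What the analytic route buys is independence from the symbolic-method formalism -- it is a self-contained series manipulation whose only combinatorial content is already packaged in the previous theorem; what the paper's (and your secondary) route buys is that it bypasses the explicit double sum entirely and makes the geometric-series structure visible at the level of the combinatorial specification. Your bookkeeping of the two factors of $m!$ against the $1/m!$ in \eqref{gfun} is exactly right, so both arguments go through.
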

\begin{proof}
The well understood structure of lonesum matrices allows us to use again the symbolic method. We have now two atomic classes, $\mathcal{X}$ and $\mathcal{Y}$ with exponential generating function $x$ and $y$. We will need here the notion of labeled product. The labeled product of object classes $\mathcal{B}$ and $\mathcal{C}$ is obtained by forming pairs $(\beta,\gamma)$ with $\beta\in \mathcal{B}$ and $\gamma\in \mathcal{C}$, performing all possible order-consistent relabelings. The translation rule is simple $\mathcal{A}=\mathcal{B}\times\mathcal{C}\Rightarrow A(x)=B(x)C(x)$.
An ordered pair of partitions of the sets $[k]$ and $[n]$ is a sequence of pairs of sets formed from elements in $[k]$ resp. $[n]$, with the restriction on the size of the blocks/sets. Hence, the construction of lonesum matrices in $\Lo_{S_1,S_2}$ (we use the same notation for the object class) is:
\begin{align*}
\Lo_{S_1,S_2}=\mbox{SEQ}\left(\mbox{SET}_{S_1}(\mathcal{X})\times \mbox{SET}_{S_2}(\mathcal{Y})\right).
\end{align*}
Note that we need the same number of sets of $[n]$ and $[k]$, it is not allowed in a pair of a set of $[n]$ and $[k]$ that one of the set is empty.
The translation rules of the symbolic method turn this construction into the generating function \eqref{gfunlonesum}.
\end{proof}
The next theorem is straightforward.
\begin{theorem}
Let $\widetilde{L}_{S_1,S_2}(x,y)$ be the generating function of $(S_1,S_2)$-restricted lonesum matrices. Then we have
\begin{align}\label{gfunlonesum2}
\widetilde{L}_{S_1,S_2}(x,y)=\frac{1+E_{S_1}(x)+E_{S_2}(y)+E_{S_1}(x)E_{S_2}(y)}{1-E_{S_1}(x)E_{S_2}(y)}.
\end{align}
\end{theorem}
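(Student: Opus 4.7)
The plan is to reduce the statement to the preceding theorem by decomposing an arbitrary $(S_1,S_2)$-restricted lonesum matrix into three combinatorially independent pieces: its block of all-zero rows, its block of all-zero columns, and the ``core'' submatrix obtained after deleting the all-zero rows and columns, which is an element of $\Lo_{S_1,S_2}$.

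First, I will observe that any two all-zero rows trivially coincide position by position, so they constitute a single type-block; the restriction on the number of rows of the same type therefore forces the count of all-zero rows, whenever it is positive, to lie in $S_1$, and symmetrically for all-zero columns and $S_2$. Adjoining all-zero rows or columns to a core matrix cannot disturb the types of the non-zero rows and columns, nor create any equalities among previously distinct types, so the three pieces are combinatorially independent. (In particular the empty matrix, all-zero matrices with only rows, all-zero matrices with only columns, and non-trivially all-zero rectangles are all accounted for.)

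Next, I will encode this decomposition in the symbolic method. Writing $\mathcal{X}$ and $\mathcal{Y}$ for the atomic classes of row- and column-labels exactly as in the proof of the preceding theorem, the block of all-zero rows is either empty or a non-empty $S_1$-restricted set of row labels, and similarly for all-zero columns. This yields the construction
$$\widetilde{\Lo}_{S_1,S_2} = \bigl(\{\epsilon\} \cup \mbox{SET}_{S_1}(\mathcal{X})\bigr) \times \bigl(\{\epsilon\} \cup \mbox{SET}_{S_2}(\mathcal{Y})\bigr) \times \Lo_{S_1,S_2}.$$

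Finally, the translation rules for the disjoint union and the labeled product, together with the formula $L_{S_1,S_2}(x,y) = 1/\bigl(1 - E_{S_1}(x)E_{S_2}(y)\bigr)$ from the previous theorem, give
$$\widetilde{L}_{S_1,S_2}(x,y) = \bigl(1 + E_{S_1}(x)\bigr)\bigl(1 + E_{S_2}(y)\bigr) \cdot \frac{1}{1 - E_{S_1}(x) E_{S_2}(y)},$$
and expanding the numerator yields the claimed expression \eqref{gfunlonesum2}. This is precisely why the theorem is called straightforward: the entire content is the independence of the three pieces, after which the generating function is a mechanical product; no nontrivial computation is hidden and I do not expect any real obstacle.
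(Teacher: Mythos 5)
Your proof is correct and follows essentially the same route as the paper: both decompose a general $(S_1,S_2)$-restricted lonesum matrix into a possibly empty $S_1$-restricted set of all-zero rows, a possibly empty $S_2$-restricted set of all-zero columns, and a core matrix from $\Lo_{S_1,S_2}$, then translate the product construction $(\mbox{SET}_{S_1}(\mathcal{X})+\epsilon)\times(\mbox{SET}_{S_2}(\mathcal{Y})+\epsilon)\times\Lo_{S_1,S_2}$ into $(1+E_{S_1}(x))(1+E_{S_2}(y))/(1-E_{S_1}(x)E_{S_2}(y))$. Your added remark that all all-zero rows share a single type, so their count must lie in the restriction set, is a small but welcome justification the paper leaves implicit.
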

\begin{proof}
A lonesum matrix is built up of a (possible empty) set of all-zero columns, a (possible empty) set of all-zero rows and a lonesum matrix without all-zero columns and rows. The number of all-zero rows (resp. columns) has to be contained in the set $S_1$ (resp. $S_2$ ). We will here need the empty class $\epsilon$ with generating function $1$ for taking care also about the possible empty sets. Our construction is:
$$
\widetilde{\Lo}_{S_1,S_2}=(\mbox{SET}_{S_1}(\mathcal{X})+\epsilon)\times(\mbox{SET}_{S_2}(\mathcal{Y})+\epsilon)\times\mbox{SEQ}\left(\mbox{SET}_{S_1}(\mathcal{X})\times \mbox{SET}_{S_2}(\mathcal{Y})\right).
$$
This turns automatically into \eqref{gfunlonesum2}.
\end{proof}

For example, the bivariate generating function for the classical lonesum matrices is
$$   \widetilde{L}_{\Z,\Z}= \frac{e^{x+y}}{e^x+e^y-e^{x+y}}  $$
 The bivariate generating function for the lonesum matrices such that the numbers of columns (resp. rows) of the same type are contained in the set $\mathcal{E}$ (resp. $\mathcal{O}$) is
 $$   \widetilde{L}_{\mathcal{E},\mathcal{O}}= \frac{\cosh x + \cosh x \sinh y}{1-\cosh x\sinh y + \sinh y }.$$

\subsection{$S$-restricted lonesum decomposable matrices}
In this subsection we derive some results on lonesum decomposable matrices with restriction on the number of columns and rows of the same type. Lonesum decomposable matrices were recently introduced by Kamano in \cite{Kamano}.

A $(0,1)$ matrix $D$ is \emph{lonesum decomposable} if by permuting its rows and/or columns it can be written in a form:
$$
\begin{pmatrix}
L_1&&&\\
 &L_2&&\\
&&\ddots &\\
&&&L_r
\end{pmatrix},
$$
where $L_i$ $(1\leq i\leq r)$ are lonesum matrices. Kamano showed that for each lonesum decomposable matrix $r$ is uniquely determined and called $r$ the \emph{decomposition order}. Kamano proved that a matrix is lonesum decomposable if and only if it does not contain any submatrix that can be obtained from $U$ and $U^t$ by permuting rows and columns, where
$$
U=\begin{pmatrix}
1&1&0\\
1&0&1
\end{pmatrix}.
$$

It is interesting that a similar characterization was also given for the lonesum matrices with the matrix $\left(\begin{array}{cc} 1&0\\0&1\end{array}\right)$. $01$ matrices are also investigated in extremal combinatorics. The main problem in this setting is the determination of the maximal number of $1$'s in a $n\times m$ $01$ matrix not having a certain matrix $M$ (or set of matrices) as a submatrix. This number is denoted usually by $\mbox{ext}(n,m;M)$. The matrix $U$ is a special matrix from this point of view, since it is the smallest matrix for that $\mbox{ext}(n,n;U)=\Theta(n\log n)$ \cite{FH}. (The notation $f(n)=\Theta(g(n))$ means that the function $f(n)$ is asymptotically bounded both above and below by the function $g(n)$, i.e. there are positive numbers $k_1,k_2$ such that there is an index $n_0$ such that for all $n>n_0$ $k_1g(n)\leq f(n)\leq k_2g(n)$ holds.)

We mention here that $U$ is one of the smallest matrix with $\mbox{ext}(n; U)=\Theta(n\log n)$. $\mbox{ext}(n; U)$ is the maximal number of $1$ entries that a $n\times n$ matrix that avoid $U$ as a submatrix can have.

Let $D_{r,S_1,S_2}(n,k)$ denote the number of $n\times k$ lonesum decomposable matrices of decomposition order $r$ such that the number of rows of the same type is contained in $S_1$ and the number of columns of the same type is contained in $S_2$. Let $D_{S_1,S_2}$ denote the number of lonesum decomposable matrices such that the number of rows of the same type is contained in $S_1$ and the number of columns of the same type is contained in $S_2$.
\begin{theorem}For $D_{r,S_1,S_2}(n,k)$ and $D_{S_1,S_2}(n,k)$ we have the following generating functions:
$$
\sum_{n=0}^{\infty} \sum_{k=0}^{\infty} D_{r,S_1,S_2}(n,k)\frac{x^n}{n!}\frac{y^k}{k!}= \frac{(1+E_{S_1}(x))(1+E_{S_2}(y))}{r!}
\left(\frac{1}{1-E_{S_1}(x)E_{S_2}(y)}-1\right)^r,
$$
and
$$
\sum_{n=0}^{\infty} \sum_{k=0}^{\infty} D_{S_1,S_2}(n,k)\frac{x^n}{n!}\frac{y^k}{k!}=(1+E_{S_1}(x))(1+E_{S_2}(y))\times
\exp\left(\frac{1}{1-E_{S_1}(x)E_{S_2}(y)}-1\right).
$$
\end{theorem}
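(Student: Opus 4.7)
The plan is to mirror the symbolic-method arguments used throughout this section. A lonesum decomposable matrix of decomposition order $r$ with the prescribed restrictions is uniquely determined by (a) a set of all-zero rows, either empty or of size in $S_1$; (b) a set of all-zero columns, either empty or of size in $S_2$; and (c) an unordered collection of exactly $r$ non-empty $(S_1,S_2)$-restricted lonesum blocks (no all-zero rows or columns) supported on the remaining row and column labels. Kamano's characterization guarantees both that $r$ and the family of blocks are uniquely determined by $D$ and that the collection is a genuine multiset — any permutation of the $r$ blocks yields the same decomposable matrix once the label set of each block has been fixed.

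By the previous theorem, the bivariate egf of the class $\Lo_{S_1,S_2}$ is $\frac{1}{1 - E_{S_1}(x)E_{S_2}(y)}$, so its non-empty members have egf $\frac{1}{1 - E_{S_1}(x)E_{S_2}(y)} - 1$. I would then write the symbolic specification
$$\mathcal{D}_{r,S_1,S_2} = \bigl(\mbox{SET}_{S_1}(\mathcal{X}) + \epsilon\bigr) \times \bigl(\mbox{SET}_{S_2}(\mathcal{Y}) + \epsilon\bigr) \times \mbox{SET}_r\bigl(\Lo_{S_1,S_2} \setminus \{\epsilon\}\bigr)$$
and apply the translation rules (labeled product becomes multiplication of egfs, $\mbox{SET}_r$ contributes a factor $(\cdot)^r/r!$) to recover the first formula directly. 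For the second, I would sum over $r \geq 0$, i.e.\ replace $\mbox{SET}_r$ by $\mbox{SET}$, and apply the exponential rule $\mbox{SET}(\mathcal{B}) \Rightarrow \exp(B(x,y))$.

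The main obstacle is conceptual rather than computational: one has to justify the use of $\mbox{SET}_r$ (which produces the $1/r!$) instead of $\mbox{SEQ}_r$. This rests on Kamano's uniqueness theorem, which ensures that every lonesum decomposable matrix is fully encoded by an unordered multiset of labeled lonesum blocks together with the sets of all-zero rows and columns. Once this observation is in place, the rest of the argument is an automatic application of the symbolic method already exercised above.
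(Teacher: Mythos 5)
Your proposal is correct and follows essentially the same route as the paper: the paper's specification uses $\mbox{SET}_r(\mbox{SEQ}_{>0}(\mbox{SET}_{S_1}(\mathcal{X})\times\mbox{SET}_{S_2}(\mathcal{Y})))$ together with the two optional all-zero blocks, which is exactly your $\mbox{SET}_r$ of non-empty restricted lonesum matrices with generating function $\frac{1}{1-E_{S_1}(x)E_{S_2}(y)}-1$. Your extra remark justifying $\mbox{SET}_r$ over $\mbox{SEQ}_r$ via the uniqueness of Kamano's decomposition is a welcome clarification but does not change the argument.
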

\begin{proof}
A lonesum decomposable matrix with decomposition order $r$ is a (not empty) $r$-set of lonesum matrices without all-zero rows and columns, a (possible empty) set of all-zero columns and a (possible empty) set of all zero-rows.

Using the construction of lonesum matrices and taking care of the restriction on the size of the sets (the number of the same type of columns and rows) we have the construction:
$$
\mathcal{D}_{r;S_1,S_2}=(\mbox{SET}_{S_1}(\mathcal{X})+\epsilon)\times (\mbox{SET}_{S_2}(\mathcal{Y})+\epsilon)\times \mbox{SET}_r(\mbox{SEQ}_{>0}\left(\mbox{SET}_{S_1}(\mathcal{X})\times \mbox{SET}_{S_2}(\mathcal{Y})\right)).
$$
For lonesum decomposable matrices we have a set of lonesum matrices (without restriction on the size) intead of $r$-set of lonesum matrices.
$$
\mathcal{D}_{S_1,S_2}=(\mbox{SET}_{S_1}(\mathcal{X})+\epsilon)\times (\mbox{SET}_{S_2}(\mathcal{Y})+\epsilon)\times \mbox{SET}(\mbox{SEQ}_{>0}\left(\mbox{SET}_{S_1}(\mathcal{X})\times \mbox{SET}_{S_2}(\mathcal{Y})\right)).
$$
The translation rules of the symbolic method turn the construction into the formulas given in the theorem.
\end{proof}

For example, the bivariate generating function for the classical lonesum decomposable matrices are
$$
\sum_{n=0}^{\infty} \sum_{k=0}^{\infty} D_{r,\Z,\Z}(n,k)\frac{x^n}{n!}\frac{y^k}{k!}= \frac{e^{x+y}}{r!}
\left(\frac{1}{e^x+e^y-e^{x+y}}-1\right)^r,
$$
and
$$
\sum_{n=0}^{\infty} \sum_{k=0}^{\infty} D_{\Z,\Z}(n,k)\frac{x^n}{n!}\frac{y^k}{k!}=
\exp\left(\frac{1}{e^x+e^y-e^{x+y}}+x+y-1\right).
$$

\section{$S$-restricted poly-Bernoulli Numbers}

The \emph{poly-Bernoulli numbers}  $B_n^{(k)}$ were introduced by Kaneko  \cite{Kaneko} by using the following generating function
\begin{align*}
\frac{\li_k(1-e^{-t})}{1-e^{-t}}=\sum_{n=0}^{\infty}B_n^{(k)}\frac{t^n}{n!}, \quad (k \in \Z)
\end{align*}
where
\begin{align*}
\li_k(t)=\sum_{n=1}^\infty\frac{t^n}{n^k}
\end{align*}
is the $k$-th \emph{polylogarithm function}. If $k=1$ we get $B_n^{(1)}=(-1)^nB_n$ for $n\geq 0$, where $B_n$ are the Bernoulli numbers.

The poly-Bernoulli numbers $B_n^{(k)}$ \cite[Theorem 1]{Kaneko} can be defined by means of Stirling numbers of the second kind as follows:
\begin{align}
B_n^{(k)}=\sum_{i=0}^n{n \brace i}\frac{(-1)^{n-i}i!}{(i+1)^k}\,. \label{polber}
\end{align}

Brewbaker \cite{Brew} proved combinatorially that for any positive integer $n$ and $k$,
$$|\Lo(n,k)|=B_{n}^{(-k)}.$$
In the proof \cite{Brew} Brewbaker considered a $01$ matrix as a sequence of its columns. Given $i$ different not all-zero columns, there are $i^k$ possibilities to build up a matrix. However, since in a lonesum matrix the permutation matrices of size $2\times 2$ can not appear as a submatrix, there are restrictions on a set of columns from that we can built up a lonesum matrix. The cardinality of such a set is ${n\brace i}i!$. Hence, there are ${n\brace i}i!(i+1)^k$  lonesum matrices with at most $i$ different columns. An inclusion-exclusion argument finishes the proof of the formula \eqref{polber} with negative $k$ values.

We define the  \emph{$S$-restricted poly-Bernoulli numbers} by the expression 
\begin{align}\label{defipolyB}
B_{n,S}^{(k)}=\sum_{i=0}^n{n \brace i}_{S}\frac{(-1)^{n-i}i!}{(i+1)^k}.
\end{align}

This formula can not be generalized automatically to $S$-restricted lonesum matrices. In the argumentation cited above the inclusion-exclusion process fails in this general case. The numbers $B_{n,S}^{(-k)}$ are not even positive integers for any $n$, $k$ and $S$. However, poly-Bernoulli numbers are important for their own sake; hence, we derive the generating function and an interesting property for $S$-restricted poly-Bernoulli numbers. 

In Theorem \ref{teo1polyB} we give the generating functions of $S$-restricted poly-Bernoulli numbers  in terms of $E_S(t)$.

\begin{theorem}\label{teo1polyB}
The exponential generating function of the $S$-restricted poly-Bernoulli numbers is
$$\sum_{n=0}^\infty  B_{n,S}^{(k)}\frac{t^n}{n!}= \frac{{\rm Li}_k\bigl(-E_S(-t) \bigr)}{-E_S(-t)}.$$
\end{theorem}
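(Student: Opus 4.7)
The plan is to substitute the explicit formula (\ref{defipolyB}) into the exponential generating function, swap the order of summation, recognize the inner sum via the generating function (\ref{gfun}) evaluated at $-t$, and then re-index to produce a polylogarithm. This is the same template that yields Kaneko's classical formula for $B_n^{(k)}$, so the role of $S$ is essentially cosmetic: everywhere $e^t-1$ would appear in the unrestricted argument, we now get $E_S(t)$.

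Concretely, I would start from
\begin{equation*}
\sum_{n=0}^{\infty} B_{n,S}^{(k)} \frac{t^n}{n!}
= \sum_{n=0}^{\infty} \sum_{i=0}^{n} {n \brace i}_S \frac{(-1)^{n-i} i!}{(i+1)^k} \frac{t^n}{n!},
\end{equation*}
and use the identity $(-1)^{n-i} t^n = (-1)^i(-t)^n$ to pull all the $n$-dependence into $(-t)^n/n!$. Swapping the order of summation (absolute convergence is not an issue since we are working in the formal power series ring $\R[[t]]$), the sum becomes
\begin{equation*}
\sum_{i=0}^{\infty} \frac{(-1)^i i!}{(i+1)^k} \sum_{n=i}^{\infty} {n \brace i}_S \frac{(-t)^n}{n!}.
\end{equation*}
By Equation (\ref{gfun}) the inner sum equals $E_S(-t)^i/i!$, which cancels the $i!$ in the numerator.

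After this cancellation we are left with
\begin{equation*}
\sum_{i=0}^{\infty} \frac{\bigl(-E_S(-t)\bigr)^i}{(i+1)^k}.
\end{equation*}
Shifting the index via $j = i+1$ turns this into $\frac{1}{-E_S(-t)}\sum_{j=1}^{\infty} \frac{(-E_S(-t))^j}{j^k}$, which by the definition of $\li_k$ is exactly $\li_k(-E_S(-t))/(-E_S(-t))$, as required.

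No step is genuinely hard; the only thing to watch is the sign bookkeeping in $(-1)^{n-i} = (-1)^n(-1)^i$ and the fact that $E_S(-t)$ has no constant term (since $S \subseteq \Z^+$), so that division by $-E_S(-t)$ is legitimate in the field of formal Laurent series and the result is a well-defined power series in $t$.
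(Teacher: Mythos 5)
Your proposal is correct and follows exactly the same route as the paper's proof: substitute the defining sum \eqref{defipolyB}, interchange the order of summation, identify the inner sum with $E_S(-t)^i/i!$ via \eqref{gfun}, and shift the index to obtain ${\rm Li}_k\bigl(-E_S(-t)\bigr)/\bigl(-E_S(-t)\bigr)$. The sign bookkeeping and the remark that $E_S(-t)$ has no constant term are both handled properly.
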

\begin{proof}
By Definition \eqref{defipolyB} and using (\ref{gfun}), we have
\begin{align*}
\sum_{n=0}^\infty  B_{n,S}^{(k)}\frac{t^n}{n!}&=\sum_{n=0}^\infty\sum_{\ell=0}^n{n \brace \ell}_{S}\frac{(-1)^{n-\ell}\ell!}{(\ell+1)^k}\frac{t^n}{n!}=\sum_{\ell=0}^\infty\frac{(-1)^\ell \ell!}{(\ell+1)^k}\sum_{n=\ell}^\infty{n \brace \ell}_{S}\frac{(-t)^n}{n!}\\
&=\sum_{\ell=0}^\infty\frac{(-1)^\ell \ell!}{(\ell+1)^k}\frac{1}{\ell!} \left(\sum_{i\geq 1} \frac{(-t)^{k_i}}{k_i!}\right)^\ell=\sum_{\ell=0}^\infty\frac{(-E_S(-t))^\ell}{(\ell+1)^k}=\frac{{\rm Li}_k\bigl(-E_S(-t) \bigr)}{-E_S(-t)}\,.
\end{align*}
\end{proof}

The following corollary follows from some particular cases of $S$.

\begin{corollary}
We have
\begin{align*}
\sum_{n=0}^\infty B_{n}^{(k)}\frac{t^n}{n!}&=\frac{{\rm Li}_k\bigl(1-e^{-t}\bigr)}{1-e^{-t}}\,,\\
\sum_{n=0}^\infty B_{n,\leq m}^{(k)}\frac{t^n}{n!}&=\frac{{\rm Li}_k\bigl(1-E_{m}(-t)\bigr)}{1-E_{m}(-t)}\,,
\\
\sum_{n=0}^\infty B_{n,\geq m}^{(k)}\frac{t^n}{n!}&=\frac{{\rm Li}_k\bigl(E_{m-1}(-t)-e^{-t}\bigr)}{E_{m-1}(-t)-e^{-t}}\,.
\\
\sum_{n=0}^\infty B_{n,\E}^{(k)}\frac{t^n}{n!}&=\frac{{\rm Li}_k\bigl(1-\cosh(t)\bigr)}{1-\cosh t}\,.\\
\sum_{n=0}^\infty B_{n,\Odd}^{(k)}\frac{t^n}{n!}&=\frac{{\rm Li}_k\bigl(\sinh t \bigr)}{\sinh t}\,.
\end{align*}
\end{corollary}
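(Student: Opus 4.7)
The plan is to substitute the definition
\begin{equation*}
B_{n,S}^{(k)}=\sum_{\ell=0}^n{n \brace \ell}_{S}\frac{(-1)^{n-\ell}\ell!}{(\ell+1)^k}
\end{equation*}
directly into the exponential generating function $\sum_{n\ge 0} B_{n,S}^{(k)} t^n/n!$, interchange the order of summation, and then invoke the generating function identity \eqref{gfun} for the $S$-restricted Stirling numbers of the second kind. The interchange is unconditionally valid in the ring of formal power series in $t$, since the inner series in $n$ starts at $n=\ell$.

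After swapping, the outer index is $\ell$ and the inner sum is $\sum_{n\ge\ell}{n\brace\ell}_S (-1)^{n-\ell} t^n/n!$. Writing $(-1)^{n-\ell}t^n=(-1)^\ell(-t)^n$ lets me pull $(-1)^\ell$ out and identify the remainder with $\sum_{n\ge\ell}{n\brace\ell}_S(-t)^n/n!$, which by \eqref{gfun} equals $(E_S(-t))^\ell/\ell!$. The factor $\ell!$ from the definition cancels the $1/\ell!$ from the generating function, and the residual $(-1)^\ell$ combines with $(E_S(-t))^\ell$ to give
\begin{equation*}
\sum_{n\ge 0}B_{n,S}^{(k)}\frac{t^n}{n!}=\sum_{\ell\ge 0}\frac{(-E_S(-t))^\ell}{(\ell+1)^k}.
\end{equation*}

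The final step is the index shift $m=\ell+1$, which rewrites the right-hand side as $\frac{1}{-E_S(-t)}\sum_{m\ge 1}\frac{(-E_S(-t))^m}{m^k}=\frac{\li_k(-E_S(-t))}{-E_S(-t)}$, matching the stated formula. There is no substantive obstacle: the computation is a routine formal-power-series manipulation, and the only point requiring care is sign bookkeeping, specifically checking that the leftover $(-1)^\ell$ obtained by rewriting $(-1)^{n-\ell}t^n$ as $(-1)^\ell(-t)^n$ is absorbed into $(E_S(-t))^\ell$ to yield $(-E_S(-t))^\ell$, so that the argument of the polylogarithm emerges as $-E_S(-t)$ rather than $+E_S(-t)$.
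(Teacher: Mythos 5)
Your computation is correct and is essentially the paper's own argument: the paper proves the general identity $\sum_{n\ge 0}B_{n,S}^{(k)}t^n/n!=\li_k\bigl(-E_S(-t)\bigr)/\bigl(-E_S(-t)\bigr)$ (Theorem \ref{teo1polyB}) by exactly the interchange-of-summation and sign-bookkeeping you describe, and the corollary is then obtained by substituting the explicit $E_S$ for each choice of $S$ (e.g.\ $-E_{\Z^+}(-t)=1-e^{-t}$, $-E_{\Odd}(-t)=\sinh t$). The only thing left implicit in your write-up is this final routine specialization to the five listed sets.
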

\noindent

From the relation
\begin{align}\label{deri}
\li_k'(x)=\frac{1}{x}\li_{k-1}(x)
\end{align}
we obtain the following identity involving iterated integrals.
\begin{theorem}\label{teo1polyC}
The exponential generating functions of the $S$-restricted poly-Bernoulli numbers satisfies
\begin{multline*}
\sum_{n=0}^\infty  B_{n,S}^{(k)}\frac{x^n}{n!}=\frac{1}{-E_S(-x)}\underbrace{\int_{0}^x\frac{-E_S'(-t)}{E_S(-t)}\int_{0}^x\frac{-E_S'(-t)}{E_S(-t)}\cdots \int_{0}^x\frac{-E_S'(-t)}{E_S(-t)}}_{k-1} \\\times (-\log(1+E_S(-t))) \underbrace{dt\cdots dt}_{k-1}
\end{multline*}
\end{theorem}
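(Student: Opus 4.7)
The starting point is Theorem \ref{teo1polyB}, which gives
$$\sum_{n=0}^\infty B_{n,S}^{(k)}\frac{x^n}{n!}=\frac{\li_k(u(x))}{u(x)},\qquad u(x):=-E_S(-x).$$
So the claim reduces to establishing the identity
$$\li_k(u(x)) \;=\; \int_0^x\frac{-E_S'(-t_1)}{E_S(-t_1)}\int_0^{t_1}\frac{-E_S'(-t_2)}{E_S(-t_2)}\cdots\int_0^{t_{k-2}}\frac{-E_S'(-t_{k-1})}{E_S(-t_{k-1})}\bigl(-\log(1+E_S(-t_{k-1}))\bigr)\,dt_{k-1}\cdots dt_1,$$
after which one divides by $u(x)=-E_S(-x)$. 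My plan is to peel off one integral at a time using \eqref{deri}.

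First, compute $u'(x)=E_S'(-x)$, so that $u'(x)/u(x)=-E_S'(-x)/E_S(-x)$. Applying the chain rule together with \eqref{deri} gives, for every $k\geq 2$,
$$\frac{d}{dx}\li_k(u(x)) \;=\; \li_k'(u(x))\,u'(x) \;=\; \frac{u'(x)}{u(x)}\li_{k-1}(u(x)) \;=\; \frac{-E_S'(-x)}{E_S(-x)}\,\li_{k-1}(u(x)).$$
Since $u(0)=-E_S(0)=0$ (each exponent $k_i$ in $E_S$ is positive) and $\li_k(0)=0$, integrating from $0$ to $x$ yields
$$\li_k(u(x)) \;=\; \int_0^x\frac{-E_S'(-t)}{E_S(-t)}\,\li_{k-1}(u(t))\,dt.$$

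Next, I iterate this relation. Substituting the analogous expression for $\li_{k-1}(u(t_1))$, then for $\li_{k-2}(u(t_2))$, and so on, produces a nested $(k-1)$-fold integral whose innermost integrand involves $\li_1(u(t_{k-1}))$. Because $\li_1(z)=-\log(1-z)$, this innermost factor is exactly
$$\li_1(u(t_{k-1}))=-\log\bigl(1-u(t_{k-1})\bigr)=-\log\bigl(1+E_S(-t_{k-1})\bigr),$$
which is the base case used in the statement. After the $(k-1)$ iterations the displayed iterated-integral representation of $\li_k(u(x))$ is obtained; dividing by $-E_S(-x)$ finishes the proof.

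No serious obstacle is anticipated: everything is driven by the defining relation \eqref{deri} and the vanishing of $u(0)$ and $\li_k(0)$. The only mild subtlety is notational, since the statement uses the convention of writing all inner limits as $\int_0^x$ rather than the more explicit $\int_0^{t_{j-1}}$; I would make the proper nested-variable interpretation explicit before iterating to avoid ambiguity.
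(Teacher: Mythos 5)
Your proof is correct and follows essentially the same route as the paper: apply the relation \eqref{deri} to $\li_k(-E_S(-x))$ via the chain rule, integrate from $0$ to $x$, and iterate down to $\li_1(z)=-\log(1-z)$. Your explicit treatment of the nested integration variables and the vanishing of $u(0)$ and $\li_k(0)$ is a welcome clarification of details the paper leaves implicit.
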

\begin{proof}
From Equation \eqref{deri} we have for $k\geq 1$
$$\frac{d}{dx}\li_k(-E_S(-x))=\frac{-E_S'(-x)}{E_S(-x)}\li_{k-1}(-E_S(-x)).$$
Therefore
\begin{align*}
&\li_k(-E_S(-x))=\int_{0}^x\frac{-E_S'(-t)}{E_S(-t)}\li_{k-1}(-E_S(-t))dt\\
&=\int_{0}^x\frac{-E_S'(-t)}{E_S(-t)}\int_{0}^x\frac{-E_S'(-t)}{E_S(-t)} \li_{k-2}(-E_S(-t))dtdt\\
&=\int_{0}^x\frac{-E_S'(-t)}{E_S(-t)}\int_{0}^x\frac{-E_S'(-t)}{E_S(-t)} \cdots \int_{0}^x\frac{-E_S'(-t)}{E_S(-t)} \li_{1}(-E_S(-t))\underbrace{dt\cdots dt}_{k-1}\\
&=\int_{0}^x\frac{-E_S'(-t)}{E_S(-t)}\int_{0}^x\frac{-E_S'(-t)}{E_S(-t)} \cdots \int_{0}^x\frac{-E_S'(-t)}{E_S(-t)}(-\log(1+E_S(-t))) \underbrace{dt\cdots dt}_{k-1}.
\end{align*}
\end{proof}

 \subsection*{Acknowledgements}
 The authors would like to thank the anonymous referee for  some  useful comments. The research of Jos\'e L. Ram\'irez was partially supported by Universidad Nacional de Colombia, Project No. 37805.

\end{document}